\newtheorem{proposition}{\textbf{Proposition}}
\newtheorem{theorem}{\textbf{Theorem}}
\newtheorem{corollary}{\textbf{Corollary}}
\newtheorem{remark}{\textbf{Remark}}
\newtheorem{conjecture}{\textbf{Conjecture}}
\newtheorem{lemma}{\textbf{Lemma}}
\newtheorem{question}{\textbf{Question}}
\newtheorem{exe}{\textbf{Example}}
\newcommand{\mem}[1]{\in\mathbb{#1}}
\def\N {\mathbb{N}}
\def\Z {\mathbb{Z}}
\def\Q {\mathbb{Q}}
\def\QQ {\overline{\Q}}
\def\QQQ {\QQ}
\def\C {\mathbb{C}}
\theoremstyle{remark}
\numberwithin{equation}{section}
\begin{document}

\title[SCHANUEL AND ALGEBRAIC POWERS $z^{\,w}$ AND $w^{\,z}$ WITH $z$ AND $w$ TRANSCENDENTAL]{SCHANUEL'S CONJECTURE AND ALGEBRAIC POWERS\\ $\boldsymbol{z^{\,w}}$ AND $\boldsymbol{w^{\,z}}$ WITH $\boldsymbol{z}$ AND $\boldsymbol{w}$ TRANSCENDENTAL}

\author{DIEGO MARQUES}
\address{DEPARTAMENTO DE MATEM\'{A}TICA, UNIVERSIDADE DE BRAS\' ILIA, BRAS\' ILIA, DF, BRAZIL}
\email{diego@mat.unb.br}

\author{JONATHAN SONDOW}
\address{209 WEST 97th STREET, NEW YORK, NY 10025, USA}
\email{jsondow@alumni.princeton.edu}



\begin{abstract}
We give a brief history of transcendental number theory, including Schanuel's conjecture $(S)$. Assuming $(S)$, we prove that if $z$~and~$w$ are complex numbers, not $0$ or $1$, with $z^{\,w}$~and~$w^{\,z}$ algebraic, then $z$~and~$w$ are either both rational or both transcendental. A corollary is that if $(S)$ is true, then we can find transcendental positive real numbers $x$, $y$, and $s\neq t$ such that the three numbers $x^{\,y}\neq y^{\,x}$ and $s^{\,t}=t^{\,s}$ are all integers. Another application (possibly known) is that $(S)$ implies the transcendence of the numbers$$\sqrt{2}^{\,\sqrt{2}^{\,\sqrt{2}}}, \quad i^{\,i^{\,i}},  \quad i^{\,e^{\,\pi}}.$$ We also prove that if $(S)$ holds and  $\alpha^{\,\alpha^{\,z}}=z$, where $\alpha\neq0$ is algebraic and $z$~is irrational, then $z$ is transcendental.
\end{abstract}

\maketitle

\section{Introduction: a brief ``transcendental"\ history} \label{introd}

Recall that a complex number $\alpha$ is \textit{algebraic} if there exists a nonzero polynomial $P\in \Q[x]$ such that $P(\alpha)=0$. (In that case, the smallest degree of such a polynomial is called the \textit{degree} of $\alpha$.) If no such polynomial exists, $\alpha$ is {\it transcendental}. The algebraic numbers form a field $\QQ$, the algebraic closure of the rationals~$\Q$.

Euler was probably the first person to define transcendental numbers in the modern sense (see \cite{erdos}). But transcendental number theory really began in 1844 with Liouville's proof \cite{lio} that if an algebraic number $\alpha$ has degree $n > 1$, then there exists a constant $C>0$ such that $|\alpha-p/q|>Cq^{-n}$, for all $p/q\in \Q\setminus\{0\}$. Using this result, Liouville gave the first explicit examples of transcendental numbers, e.g., Liouville's number $\sum_{n\geq 0}10^{-n!}$.

How large is the set of transcendental numbers? In the 1870s, with his countability arguments, Cantor surprised the mathematical world by proving that almost all complex numbers are transcendental. This motivated the search for such numbers.

In 1872 Hermite \cite{hermite} proved that $e$ is transcendental, and in 1884 Lindemann \cite{lindemann} extended Hermite's method to prove that $\pi$ is also transcendental. In fact, Lindemann proved a more general result.

\begin{theorem} [Hermite-Lindemann] \label{Lindemann}
The number $e^{\alpha}$ is transcendental for any nonzero algebraic number $\alpha$.
\end{theorem}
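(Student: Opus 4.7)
The plan is to argue by contradiction along the lines of Hermite's original method, as streamlined by Hilbert. Suppose $\alpha\neq 0$ is algebraic and that $e^{\alpha}$ is also algebraic; my goal is to exhibit, for every sufficiently large prime $p$, a quantity that is simultaneously a nonzero rational integer and strictly less than $1$ in absolute value.

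First I would let $\alpha=\alpha_{1},\alpha_{2},\ldots,\alpha_{n}$ be the full set of Galois conjugates of $\alpha$ over $\Q$. From the assumption that $e^{\alpha}$ is algebraic one extracts a nontrivial polynomial relation with rational integer coefficients satisfied by the values $e^{\alpha_{k}}$, after symmetrizing over the Galois group if necessary. Letting $\ell$ denote the leading coefficient of the minimal polynomial of $\alpha$, I would then introduce, for a large prime $p$, the auxiliary polynomial
$$f(x) \;=\; \frac{\ell^{\,np}\,x^{\,p-1}\prod_{i=1}^{n}(x-\alpha_{i})^{p}}{(p-1)!},$$
together with Hermite's integrals
$$J_{k} \;=\; \int_{0}^{\alpha_{k}} e^{\alpha_{k}-u}\,f(u)\,du \;=\; e^{\alpha_{k}}\sum_{j\geq 0} f^{(j)}(0) \;-\; \sum_{j\geq 0} f^{(j)}(\alpha_{k}),$$
the second equality coming from repeated integration by parts. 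Combining the $J_{k}$ by means of the assumed algebraic relation yields a single quantity $S_{p}$ in which the exponentials cancel and only symmetric expressions in the $f^{(j)}(0)$ and $f^{(j)}(\alpha_{k})$ survive.

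The concluding step — and the main technical obstacle — is the two-sided estimate on $S_{p}$. On the one hand, a crude bound on each $J_{k}$ gives $|S_{p}|\leq C^{\,p}/(p-1)!$, which tends to zero as $p\to\infty$. On the other hand, $f$ has been engineered precisely so that every $f^{(j)}(\alpha_{k})$ is an algebraic integer divisible by $p$, while $f^{(p-1)}(0)$ is an algebraic integer \emph{not} divisible by $p$ once $p$ exceeds bounds depending only on $\alpha$ and on the coefficients of the assumed algebraic relation. Taking the trace from the Galois closure down to $\Q$ therefore converts $S_{p}$ into a nonzero ordinary integer, forcing $|S_{p}|\geq 1$ and contradicting the upper bound. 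The delicate part of the argument is the divisibility-and-symmetrization bookkeeping that certifies nonvanishing modulo $p$; once that is in hand, the transcendence of $e^{\alpha}$ follows.
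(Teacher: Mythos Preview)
The paper does not actually prove the Hermite--Lindemann Theorem; it is quoted in the historical introduction as a classical result, with references to the original papers of Hermite and Lindemann, and is then used as background for the discussion of Schanuel's conjecture. So there is no ``paper's own proof'' to compare against.

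Your sketch is in the spirit of the standard Hermite--Hilbert argument, but as written it has a structural gap in the passage from the hypothesis to the auxiliary construction. The algebraicity of $e^{\alpha}$ gives you an integer relation $\sum_{j=0}^{m} b_j e^{j\alpha}=0$, i.e.\ a dependence among $e^{0},e^{\alpha},\ldots,e^{m\alpha}$, \emph{not} among the $e^{\alpha_k}$ for the Galois conjugates $\alpha_k$ of $\alpha$. When you symmetrize that relation over the Galois group (by taking the product over conjugates and expanding), the exponents that appear are all the sums $j_1\alpha_1+\cdots+j_n\alpha_n$ with $0\le j_k\le m$, not merely $\alpha_1,\ldots,\alpha_n$. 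Consequently your auxiliary polynomial $f$ must vanish (to order $p$) at \emph{those} exponents, not just at the $\alpha_i$; with the $f$ you wrote down, the quantities $f^{(j)}(\beta)$ at the relevant exponents $\beta$ are not controlled, and the ``exponentials cancel'' step does not go through.

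Once the exponent set and $f$ are corrected, the rest of your outline --- the integral representation, the upper bound $C^{p}/(p-1)!$, and the $p$-divisibility bookkeeping forcing $|S_p|\ge 1$ --- is the right shape, though the nonvanishing mod $p$ requires identifying the unique exponent of minimal (say) real part among the symmetrized set and checking its coefficient is nonzero, which you have not addressed.
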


As a consequence, the numbers $e^{\sqrt{2}}$ and $e^{\,i}$ are transcendental ($i=\sqrt{-1}$), as are $\log 2$ and $\pi$, since $e^{\log 2}=2$ and $e^{\pi i}=-1$ are algebraic. Moreover, the transcendence of $\pi$ resolved the ancient Greek problem of the quadrature of the circle: using straightedge and compass, it is not possible to construct a square and a circle with the same area.

At the 1900 International Congress of Mathematicians in Paris, as the seventh in his famous list of 23 problems, Hilbert gave a big push to transcendental number theory with his question on the arithmetic nature of the power $\alpha^{\,\beta}$ of two algebraic numbers $\alpha$~and~$\beta$. In 1934 Gelfond and Schneider, independently, completely solved the problem (see \cite[p. 9]{baker}).

\begin{theorem}[Gelfond-Schneider] \label{GelSchn}
Assume $\alpha$~and~$\beta$ are algebraic numbers, with $\alpha \neq 0$ or $1$, and $\beta$ irrational. Then $\alpha^{\,\beta}$ is transcendental.
\end{theorem}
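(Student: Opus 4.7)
The plan is to argue by contradiction via the classical Gelfond auxiliary-function method. Suppose $\gamma := \alpha^{\,\beta}$ is also algebraic; then $K := \Q(\alpha,\beta,\gamma)$ is a number field, say of degree $d$ over $\Q$. Fix a branch of $\log\alpha$ so that $\alpha^z := e^{z\log\alpha}$ is an entire function of $z$. The strategy is to construct a nonzero entire function that vanishes at many integers, then pit an analytic upper bound against a Liouville-type arithmetic lower bound at its first nonvanishing value.

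With large integer parameters $L$ and $h$ to be chosen later, I would seek a nonzero polynomial $P(x,y) = \sum_{k,l=0}^{L} c_{k,l} x^k y^l \in \Z[x,y]$ such that
\[
F(z) \;:=\; P(\alpha^z,\gamma^z) \;=\; \sum_{k,l=0}^{L} c_{k,l}\,\alpha^{(k+l\beta)z}
\]
vanishes at $z = 1,2,\ldots,h$. Because $F(m) = \sum c_{k,l}\,\alpha^{km}\gamma^{lm}$ lies in $K$, demanding $F(m)=0$ amounts to $hd$ linear conditions over $\Q$ in the $(L+1)^2$ unknowns $c_{k,l}$. Taking $h$ of order $L^2/(2d)$, Siegel's lemma then produces a nonzero integer solution with $\max|c_{k,l}|\le B$, where $\log B$ is controlled linearly in $L$ and $h$.

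Next I would verify that $F\not\equiv 0$. Since $\alpha\neq 0,1$ we have $\log\alpha\neq 0$, and the irrationality of $\beta$ forces the exponents $(k+l\beta)\log\alpha$ (for $0\le k,l\le L$) to be pairwise distinct; the corresponding exponentials are thus linearly independent over $\C$. Let $M$ be the smallest positive integer with $F(M)\neq 0$, so $M > h$. Two conflicting estimates on $|F(M)|$ then yield the contradiction. For the analytic upper bound, the quotient $G(z):=F(z)/\prod_{m=1}^{M-1}(z-m)$ is entire, and the maximum modulus principle on $|z|\le R$ with $R$ much larger than $M$ gives
\[
|F(M)| \;\le\; (M-1)!\,(R/2)^{-(M-1)}\max_{|z|=R}|F(z)|,
\]
while crude estimation yields $\max_{|z|=R}|F(z)|\le (L+1)^2 B\, e^{c_1 LR}$ for some constant $c_1 = c_1(\alpha,\beta)$. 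For the arithmetic lower bound, $F(M)\in K\setminus\{0\}$ admits a common denominator, and bounds on its $d$ Galois conjugates combined with Liouville's inequality give $|F(M)|\ge \bigl(B(L+1)^2 C^{LM}\bigr)^{-(d-1)}$ for some $C = C(\alpha,\beta,\gamma)$. Choosing $R$ as a suitable polynomial in $L$ makes the analytic bound decay essentially like $(R/M)^{-(M-1)}$, which beats the arithmetic bound (growing only exponentially in $LM$) once $L$ is sufficiently large, because $M$ is of order $L^2$.

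The main obstacle is the delicate parameter bookkeeping: one must choose $L,h,M,R$ so that the Siegel-lemma height $B$, the analytic quotient $(M-1)!/(R/2)^{M-1}$, and the Liouville factor $C^{LM(d-1)}$ align to produce a clean contradiction. A secondary technical point is the Siegel's-lemma step itself, which requires uniform control on the denominators and house of the algebraic numbers $\alpha^{km}\gamma^{lm}\in K$; this is standard but needs to be carried out with care.
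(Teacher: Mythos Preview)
The paper does not prove the Gelfond--Schneider theorem at all: it is stated in the historical introduction as a classical result, with a reference to Baker's book, and is then used as a black box throughout. So there is no ``paper's own proof'' to compare your proposal against.

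That said, your outline is recognisably the classical Gelfond auxiliary-function approach, and the overall architecture (Siegel's lemma, distinct exponents from the irrationality of $\beta$, analytic upper bound versus Liouville lower bound) is correct. The genuine weak point is the single-pass parameter balancing you describe at the end. With $h\asymp L^{2}$ zeros and $M\ge h+1$, the maximum-modulus factor you must absorb is $\max_{|z|=R}|F(z)|\ll B\,e^{c_{1}LR}$, and this term is of size $\exp(c_{1}LR)$ while the gain from the zeros is only of size $\exp\bigl((M-1)\log(R/M)\bigr)\approx\exp(L^{2}\log(R/L^{2}))$. If $R$ is any polynomial in $L$ of degree at least $2$ (needed so that $R\gg M$), then $LR$ dominates $L^{2}\log L$, and the upper bound does not beat the Liouville lower bound. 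The classical proofs close this gap by an \emph{inductive extrapolation}: one first shows $F$ (and possibly some derivatives) vanishes on $\{1,\dots,h\}$, then uses the analytic/arithmetic comparison to force vanishing on a longer range $\{1,\dots,2h\}$, iterates, and only at the end derives the contradiction $F\equiv 0$. Your sketch omits this step, and without it the bookkeeping you flag as ``delicate'' is in fact impossible to close.
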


In particular, $2^{\sqrt{2}}, (-1)^{\sqrt{2}}$, and $e^{\pi}= i^{\,-2i}$ are all transcendental.

Generalizing the notion of an algebraic number, the complex numbers $\alpha_1,\dots,\alpha_n$ are {\it algebraically dependent} if there exists a nonzero polynomial $P\in \Q[x_1,\dots,x_n]$ such that $P(\alpha_1,\dots,\alpha_n)=0$. Otherwise, $\alpha_1,\dots,\alpha_n$ are {\it algebraically independent}; in particular, they are all transcendental. (More generally, given a subfield $K$ of the complex numbers $\C$, one defines {\it algebraic \emph{(}in\emph{)}dependence over~$K$} by replacing $\Q$ with $K$.)

A major open problem in transcendental number theory is a conjecture of Schanuel which was stated in the 1960s in a course at Yale given by Lang~\cite[pp. 30--31]{La1}.

\begin{conjecture}[Schanuel's conjecture $(S)$] \label{schanuel}
If $\alpha_1,\dots,\alpha_n\in\C$ are linearly independent over~$\Q$, then there are at least $n$ algebraically independent numbers among $\alpha_1,\dots,\alpha_n,e^{\,\alpha_1},\dots,e^{\,\alpha_n}$.
\end{conjecture}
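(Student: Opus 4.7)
The plan can only be outlined, since Schanuel's conjecture is famously open; I sketch the natural induction on $n$ and indicate where it breaks. The base case $n=1$ is accessible: $\Q$-linear independence of a single $\alpha_1$ just means $\alpha_1\neq 0$, and I want transcendence degree at least $1$ in the set $\{\alpha_1,e^{\,\alpha_1}\}$. If $\alpha_1$ is transcendental there is nothing to do, and if $\alpha_1$ is a nonzero algebraic number, then Theorem~\ref{Lindemann} (Hermite--Lindemann) makes $e^{\,\alpha_1}$ transcendental. Either way the conclusion holds.

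For the inductive step I would assume a nontrivial polynomial relation $P(\alpha_1,\dots,\alpha_n,e^{\,\alpha_1},\dots,e^{\,\alpha_n})=0$ that forces the transcendence degree below $n$, select a transcendence basis inside the $2n$ numbers, and try to partition the indices so as to reduce to smaller instances handled by two deep known special cases of $(S)$: the Lindemann--Weierstrass theorem (the ``all $\alpha_i$ algebraic'' case, which yields algebraic independence of $e^{\,\alpha_1},\dots,e^{\,\alpha_n}$) and Baker's theorem on linear forms in logarithms (the ``$\alpha_i$ are logarithms of algebraic numbers'' case). The strategy is then to interpolate between these two extremes by viewing $\alpha_i$ as splitting between an ``algebraic part'' and a ``logarithm part'' relative to the chosen basis, and invoking the inductive hypothesis on each side.

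The hard part---and the reason $(S)$ has resisted attack for more than half a century---is that no existing transcendence method converts a purely $\Q$-linear hypothesis into an algebraic independence conclusion at this level of generality. The classical machinery (auxiliary functions, Siegel's lemma, zero estimates, elimination) requires quantitative Diophantine data that the bare linear-independence hypothesis does not supply, so the index-partition idea above has no known implementation. The structurally most promising alternative is Zilber's program of pseudo-exponentiation, in which $(S)$ is imposed as an axiom and the open question becomes whether $(\C,\exp)$ satisfies strong exponential-algebraic closedness. I would therefore frame any serious assault along those model-theoretic lines rather than attempt a direct Diophantine induction, while openly acknowledging that the decisive step remains out of reach and that in the rest of this paper the conjecture will only be \emph{assumed}, not proved.
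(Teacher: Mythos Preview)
The statement is labeled a \emph{Conjecture} in the paper, not a theorem, and the paper offers no proof of it whatsoever; Schanuel's conjecture is simply stated and then \emph{assumed} as a hypothesis in the results that follow. Your proposal correctly recognizes this and is honest that no proof is available, so there is no discrepancy to flag. Your verification of the case $n=1$ via Hermite--Lindemann is correct and your commentary on why the inductive step resists known methods is reasonable background, but note that none of this appears in the paper itself---the paper treats $(S)$ purely as an unproved hypothesis, so strictly speaking there is no ``paper's own proof'' to compare against.
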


For example, assume $(S)$ and take $\alpha_1=1$ and $\alpha_2=\pi i$. Then at least two of the numbers $1,\ \pi i,\ e,\ -1$ are algebraically independent. Since $1$~and~$-1$ are algebraic, $\pi i$~and~$e$ are algebraically independent. It follows, as $\pi i$~and~$\pi$ are algebraically dependent, that $\pi$~and~$e$ are also algebraically independent. In particular, $\pi + e$ and $\pi e$ are transcendental. (These conclusions are all open problems if $(S)$ is not assumed.)


\section{Statement and applications of the main result} \label{results}

The Gelfond-Schneider Theorem determines the arithmetic nature of $z^{\,w}$ when $z$~and~$w$ are both algebraic (because $z^{\,w}$ is algebraic if $w$ is rational). However, if at least one of the numbers $z$~and~$w$ is transcendental, anything is possible (see Table~\ref{table x,y,x^y}).

\begin{table}[!htb]
\centering
\begin{tabular}{|cc|cc|cc|}
\hline
& \textbf{$z$}\hspace{1.2cm} & & \textbf{$w$} \hspace{2cm} & & \textbf{$z^w$} \hspace{.7cm} \\
\hline
2 & algebraic & $\log 3/\log 2$ & transcendental & 3 & algebraic\\ \hline
2 & algebraic & $i\log 3/\log 2$ & transcendental & $3^{\,i}$ & transcendental\\ \hline
$e^{\,i}$ & transcendental & $\pi$ & transcendental & -1 & algebraic\\ \hline
$e$ & transcendental & $\pi$ & transcendental & $e^{\,\pi}$ & transcendental\\ \hline
$2^{\,\sqrt{2}}$ & transcendental & $\sqrt{2}$ & algebraic & 4 & algebraic\\ \hline
$2^{\,\sqrt{2}}$ & transcendental & $i\sqrt{2}$ & algebraic & $4^{\,i}$ & transcendental \\ \hline
\end{tabular}\newline
\caption{Possibilities for $z^w$ when $z$ or $w$ is transcendental.}
\label{table x,y,x^y}
\end{table}

In his Master's thesis, the first author asked a version of the following question.

\begin{question} \label{T_1^{T_2}}
If $z$~and~$w$ are transcendental, must at least one of the numbers $z^{\,w}$~and~$w^{\,z}$ also be transcendental?
\end{question}

If the answer were yes, then in particular $z^{\,z}$ would be transcendental whenever $z$ is. That would agree with the expected (but still unproved) transcendence of the numbers $e^{\,e},\ \pi^{\,\pi}$, and $(\log 2)^{\,\log 2}$.

In fact, though, \emph{the answer to Question~\ref{T_1^{T_2}} in the case $z$ = $w$ is no}. That was shown by us in~\cite[Proposition~2.2]{SM}, where the Gelfond-Schneider Theorem was used to prove the following.

\begin{proposition} \label{pp}
Given $a\in [e^{-1/e},\infty)$, let $t\in\mathbb{R}^+$ satisfy $t^{\, t}=a$. If either

\emph{(i).} $a\in \Q\setminus\{n^n:n\in \N\}$, or

\emph{(ii).} $a^n\in \QQ \setminus\mathbb{Q}$ for all $n\in \N$,\\
then $t$ is transcendental.
\end{proposition}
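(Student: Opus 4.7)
The plan is to argue by contradiction: assume $t$ is algebraic, and derive a contradiction in both (i) and (ii) by splitting on whether $t$ is rational or irrational. First I observe that $t\neq 0,1$: positivity rules out $t=0$, and $t=1$ would give $a=1$, contradicting both hypotheses (since $1=1^1\in\{n^n:n\in\N\}$, and $1^n=1\in\Q$). If $t$ is an irrational algebraic number, Theorem~\ref{GelSchn} applied with $\alpha=\beta=t$ forces $t^t$ to be transcendental, contradicting $t^t=a$, which is algebraic in both cases---rational in (i), algebraic via $a=a^1\in\QQ$ in (ii). Hence $t$ must be a positive rational; I write $t=p/q$ in lowest terms with $\gcd(p,q)=1$.

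Next, raising $t^t=a$ to the $q$-th power yields
$$a^q \;=\; t^{tq} \;=\; t^p \;=\; \frac{p^p}{q^p},$$
whose right side is already in lowest terms since $\gcd(p^p,q^p)=1$. In case~(ii) this is immediately fatal: the left side is irrational by hypothesis while the right side is rational. In case~(i), writing $a=r/s$ in lowest terms gives $r^q/s^q=p^p/q^p$ with both sides reduced; matching numerators and denominators yields $q^p=s^q$. A $\pi$-adic valuation argument disposes of this: for any prime $\pi\mid q$ the identity $p\cdot v_\pi(q)=q\cdot v_\pi(s)$ combined with $\gcd(p,q)=1$ forces $q\mid v_\pi(q)$, hence $v_\pi(q)\geq q$, hence $q\geq\pi^q\geq 2^q>q$---a contradiction unless $q$ has no prime divisors at all, i.e., $q=1$. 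But then $t=p\in\N$, so $a=p^p\in\{n^n:n\in\N\}$, contradicting~(i).

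The main obstacle is the rational-$t$ branch of case~(i), where Gelfond--Schneider offers no leverage because the exponent becomes rational; the resolution rests on the elementary but slightly subtle fact that $t^t$ is irrational whenever $t$ is a positive non-integer rational, which the lowest-terms comparison above encodes. Everything else is a one-line invocation of Theorem~\ref{GelSchn} or a direct hypothesis check.
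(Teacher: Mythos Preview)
Your argument is correct. The paper itself does not prove this proposition---it merely quotes it from~\cite[Proposition~2.2]{SM} and notes that the Gelfond--Schneider Theorem is the tool used there---so there is no in-paper proof to compare against. Your proof is exactly in the spirit of that hint: Gelfond--Schneider disposes of the irrational-algebraic case immediately, and the rational case reduces to the elementary (and classical) fact that $(p/q)^{p/q}$ is irrational whenever $q>1$ and $\gcd(p,q)=1$, which your $\pi$-adic valuation argument establishes cleanly. One cosmetic remark: in the chain ``$v_\pi(q)\geq q$, hence $q\geq\pi^q$'' you are tacitly using that $\pi^{v_\pi(q)}\mid q$, so $q\geq\pi^{v_\pi(q)}\geq\pi^q$; it might be worth making that intermediate inequality explicit.
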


For instance, the numbers $t>0$~and~$t_1>0$ which satisfy $t^{\, t}=2$ and $t_1^{\, t_1}=1+\sqrt{2}$ are both transcendental.

Proposition \ref{pp} case~(ii) was generalized by the first author~\cite[Lemma and proof of Proposition]{die30} and was extended  further by him and Jensen~\cite[proof of Theorem~7]{jm}.

Now consider Question~\ref{T_1^{T_2}} in the case $z\neq w$. To study the further subcase when $z^w = w^z$, we recall a classical result (related to a problem posed in 1728 by D.~Bernoulli~\cite[p.~262]{bernoulli}). For a proof, see~\cite[Lemma~3.2]{SM}.

\begin{lemma} \label{xy=yx=z}
Given $r \mem{R}^+$, there exist positive real numbers $s < t$ with $s^{\,t} = t^{\,s} = r$ if and only if $r > e^{\,e} = 15.15426 \ldots$. In that case, $s$ and $t$ are uniquely determined, and $1<s<e<t$.
\end{lemma}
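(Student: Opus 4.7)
The plan is to reformulate $s^{\,t}=t^{\,s}$ via the auxiliary function $f(x)=\log x/x$, parameterize the solution set by a single variable, and then use monotonicity of the resulting common value $r$ to get existence and uniqueness.

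For $s,t>0$ the relation $s^{\,t}=t^{\,s}$ is equivalent to $f(s)=f(t)$. A quick calculation shows $f$ is negative on $(0,1)$, zero at $1$, strictly increasing on $(1,e)$ up to the maximum $f(e)=1/e$, then strictly decreasing on $(e,\infty)$ back to $0$, so any pair $s\neq t$ with $f(s)=f(t)$ must satisfy $1<s<e<t$. Introducing the parameter $u=t/s>1$ and substituting $t=us$ into $s^{\,t}=t^{\,s}$ yields $s^{\,u-1}=u$, giving the explicit parametrization
$$s(u)=u^{1/(u-1)},\qquad t(u)=u\,s(u)=u^{u/(u-1)},\qquad u\in(1,\infty).$$
Standard limits show $s,t\to e$ as $u\to 1^{+}$, and $s\to 1$, $t\to\infty$ as $u\to\infty$; moreover $\log r(u)=t\log s=\dfrac{u\log u}{u-1}\cdot u^{1/(u-1)}$ tends to $1\cdot e=e$ as $u\to 1^{+}$ (so $r\to e^{\,e}$) and to $\infty$ as $u\to\infty$.

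The main technical step, which I expect to be the obstacle, is the strict monotonicity of $r(u)$. Differentiating the above formula and simplifying gives
$$\frac{d\log r}{du}=\frac{t}{u(u-1)^{3}}\bigl[(u-1)^{2}-u(\log u)^{2}\bigr],$$
so it suffices to prove $(u-1)^{2}>u(\log u)^{2}$ for $u>1$. Taking positive square roots and substituting $v=\sqrt{u}$ reduces this to $v-1/v>2\log v$ for $v>1$, which is immediate since the difference vanishes at $v=1$ and has derivative $(v-1)^{2}/v^{2}\geq 0$. Thus $u\mapsto r(u)$ is a continuous strictly increasing bijection from $(1,\infty)$ onto $(e^{\,e},\infty)$, which simultaneously furnishes both directions of the equivalence and the uniqueness of the pair $(s,t)$.
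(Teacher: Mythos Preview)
Your argument is correct: the analysis of $f(x)=\log x/x$ forces $1<s<e<t$, the substitution $u=t/s$ gives a bijection between pairs $(s,t)$ and parameters $u\in(1,\infty)$, your derivative formula
\[
\frac{d\log r}{du}=\frac{t}{u(u-1)^{3}}\bigl[(u-1)^{2}-u(\log u)^{2}\bigr]
\]
checks out, and the reduction of the bracketed inequality to $v-1/v>2\log v$ via $v=\sqrt{u}$ is clean and valid. The limits at $u\to1^{+}$ and $u\to\infty$ are also right, so $r$ maps $(1,\infty)$ bijectively onto $(e^{e},\infty)$, which is exactly the statement.

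As for comparison: the paper does not actually prove this lemma---it simply cites \cite[Lemma~3.2]{SM}. Your parametrization by $u=t/s$ is one of the two classical routes (the other being to work directly with the inverse branches of $f$ on $(1,e)$ and $(e,\infty)$ and study $r$ as a function of $s$), and the cited reference in fact uses essentially the same $u$-parametrization. So there is no meaningful methodological difference to report.
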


Again using the Gelfond-Schneider Theorem, we proved the following~\cite[Proposition~3.1 and Corollary~3.5]{SM}.

\begin{proposition} \label{(i)RT=TR=A}
Assume the numbers $r,s$, and $t$ are as in Lemma~\ref{xy=yx=z}. If either

\emph{(i).} $16\neq r\in \N$, or

\emph{(ii).} $r^n\in \QQ \setminus\mathbb{Q}$ for all $n\in \N$,\\
then at least one of the numbers $s$~and~$t$ is transcendental.
\end{proposition}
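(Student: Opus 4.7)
The plan is contrapositive: assume that both $s$ and $t$ are algebraic and derive contradictions with both hypotheses (i) and (ii). Lemma~\ref{xy=yx=z} gives $1 < s < e < t$, so $s, t \notin \{0,1\}$, permitting free use of Theorem~\ref{GelSchn}.

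First I reduce to the case where both $s$ and $t$ are rational. If $t$ were irrational algebraic, then Gelfond--Schneider applied to $s^{\,t}$ would force $r = s^{\,t}$ to be transcendental, contradicting (i) (which gives $r\in\N$) and also contradicting (ii) (which, for $n=1$, gives $r\in\QQ$). The symmetric argument using $r = t^{\,s}$ rules out $s$ irrational. Hence both $s$ and $t$ must be rational.

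The core step is to classify the positive rational solutions of $s^{\,t}=t^{\,s}$ with $s<t$. Setting $\lambda = t/s > 1$, the equation becomes $s^{\lambda - 1} = \lambda$, yielding the classical parametrization $s = \lambda^{1/(\lambda-1)}$, $t = \lambda s$. Write $\lambda = p/q$ in lowest terms and $d = p-q$, so $\gcd(d,q) = 1$ and $s = (p/q)^{q/d}$. For $s$ to be rational, a short prime-factorization check shows that both $p$ and $q$ must be perfect $d$-th powers, say $p = A^d$ and $q = Q^d$ with $A > Q \geq 1$. Then $d = A^d - Q^d \geq (Q+1)^d - Q^d > d$ whenever $d \geq 2$, a contradiction; so $d = 1$. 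Hence $\lambda = 1 + 1/n$ for some positive integer $n$, giving $s = (1+1/n)^n$ and $t = (1+1/n)^{n+1}$. Raising $r = s^{\,t}$ to the $n^{n+1}$-th power,
\[
r^{n^{n+1}} \;=\; s^{(n+1)^{n+1}} \;=\; \frac{(n+1)^{n(n+1)^{n+1}}}{n^{n(n+1)^{n+1}}},
\]
a rational in lowest terms (since $\gcd(n,n+1)=1$). Thus $r^{n^{n+1}} \in \Q$, violating (ii); and $r \in \N$ would force $n^{n(n+1)^{n+1}} = 1$, i.e.\ $n = 1$, whence $r = 2^{\,4} = 16$, violating the constraint $16 \neq r$ of (i). Either way we reach the desired contradiction.

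The principal obstacle is the rational classification of $(s,t)$; the Gelfond--Schneider reduction and the final arithmetic are essentially mechanical. If this rational parametrization is already available in the companion paper \cite{SM}, one can cite it directly; otherwise the elementary $d$-th power argument above supplies it in a few lines.
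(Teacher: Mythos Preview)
Your argument is correct. The contrapositive reduction via Gelfond--Schneider is the natural move, and your classification of the positive rational solutions of $s^{\,t}=t^{\,s}$ with $s<t$ is carried out cleanly: the step $\gcd(d,q)=1$, the deduction that $p$ and $q$ are $d$-th powers, and the inequality $(Q+1)^d-Q^d>d$ for $d\ge2$ are all sound, and the final arithmetic with $r^{n^{n+1}}$ correctly disposes of both hypotheses.

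As for comparison with the paper: note that the present paper does \emph{not} actually prove Proposition~\ref{(i)RT=TR=A}. It is quoted from the companion paper~\cite{SM} (as Proposition~3.1 and Corollary~3.5 there), with only the remark that the Gelfond--Schneider Theorem is the main tool. Your proof is therefore a self-contained substitute for that citation. The rational parametrization $s=(1+1/n)^n$, $t=(1+1/n)^{n+1}$ of solutions to $s^{\,t}=t^{\,s}$ is classical and does appear in~\cite{SM}, so in the context of this paper one would indeed cite it rather than reprove it; but your $d$-th-power argument is a perfectly good way to supply it if a standalone proof is wanted.
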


In ~\cite[Conjecture 3.7]{SM} we made the following prediction.
 
\begin{conjecture} \label{conj}
A stronger conclusion holds in Proposition~\ref{(i)RT=TR=A}, namely, that $s$~and~$t$ are both transcendental.
\end{conjecture}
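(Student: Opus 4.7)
The plan is to assume Schanuel's conjecture $(S)$ and deduce Conjecture~\ref{conj} as an immediate corollary of Proposition~\ref{(i)RT=TR=A} together with the main theorem of this paper (as announced in the abstract): \emph{if $z, w \in \C \setminus \{0, 1\}$ have $z^w$ and $w^z$ both algebraic, then $z$ and $w$ are either both rational or both transcendental}.

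By Lemma~\ref{xy=yx=z}, the reals $s$ and $t$ satisfy $1 < s < e < t$, so neither is $0$ or $1$. In both hypotheses (i) and (ii) of Proposition~\ref{(i)RT=TR=A}, the common value $r = s^{\,t} = t^{\,s}$ is algebraic: in case (i) it is a positive integer, and case (ii) with $n = 1$ gives $r \in \QQ$. Applying the main theorem with $z = s$, $w = t$ forces $s$ and $t$ to be either both rational or both transcendental. Proposition~\ref{(i)RT=TR=A} rules out the first alternative, because it already asserts that at least one of $s, t$ is transcendental; hence both must be transcendental, which is Conjecture~\ref{conj}. (Alternatively, one can exclude the rational case directly: in case (ii) if $s, t \in \Q$ with $t = p/q$ in lowest terms, then $r^q = s^{\,p} \in \Q$, contradicting the hypothesis; and in case (i) the only rational pair $s<t$ with $s^{\,t}=t^{\,s}\in\N$ is $s=2,\,t=4,\,r=16$, which is excluded.)

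The genuine obstacle is the main theorem on which this argument rests. Its three ``pure'' configurations (both $z, w$ rational; both algebraic irrational; both transcendental) are controlled by the Gelfond--Schneider theorem, so the essential point is to use $(S)$ to rule out the \emph{mixed} case in which, say, $z$ is algebraic (and $\neq 0, 1$) while $w$ is transcendental, with $z^w$ and $w^z$ both algebraic. The natural approach is to apply Schanuel to a carefully chosen subset of $\{\log z,\ \log w,\ w\log z,\ z\log w\}$, whose exponentials are $z,\ w,\ z^w,\ w^z$; after verifying $\Q$-linear independence of the chosen subset, the algebraicity of $z, z^w, w^z$ together with $(S)$ should bound the transcendence degree of $\Q(w, \log z, \log w)$ from below in a way that contradicts the Hermite--Lindemann transcendence of $\log z$ (and of $w\log z$, since $z^w$ is algebraic). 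Once the mixed case is excluded, the dichotomy follows by Theorem~\ref{GelSchn}, and Conjecture~\ref{conj} is the routine consequence sketched above.
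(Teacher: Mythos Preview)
Your reduction of Conjecture~\ref{conj} to Theorem~\ref{main} is correct and is exactly how the paper proceeds: the ``In particular'' clause of Theorem~\ref{main} is just your first two paragraphs made explicit, and the substance lies entirely in the proof of Theorem~\ref{main}.

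Where you diverge from the paper is in the sketch for Theorem~\ref{main}. The paper does \emph{not} apply $(S)$ to a subset of $\{\log z,\log w,w\log z,z\log w\}$. Instead, assuming $z$ algebraic and $w$ transcendental, it sets $\alpha=z^{w}$, $\beta=w^{z}$, uses the multiplicative form of $(S)$ (Remark~\ref{log}) four separate times on the tuples $(\alpha,z)$, $(\alpha,z,\log\alpha,\log z)$, $(\alpha,\beta,z)$, and $(\alpha,\beta,z,\log\alpha,\log\beta,\log z)$, and brings in \emph{double} logarithms through the identity $z^{-1}\log\beta=\log\log\alpha-\log\log z$; the final contradiction is that the six numbers $\log\alpha,\log\beta,\log z,\log\log\alpha,\log\log\beta,\log\log z$ are forced to be algebraically independent while satisfying that identity. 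Your proposed route can in fact be completed, and is arguably shorter (two applications of $(S)$, no double logarithms), but not in a single stroke as your wording suggests. Verifying that all four of $\log z,\log w,w\log z,z\log w$ are $\Q$-linearly independent requires excluding a nontrivial relation $z^{a}\alpha^{c}\beta^{d}=1$ (equivalently $a\log z+cw\log z+dz\log w=0$ with $d\neq0$), and for that one already needs $w,\log z,\log w$ to be algebraically independent---which itself takes a preliminary application of $(S)$ to the three-element set $\{\log z,\log w,w\log z\}$. After that bootstrap, $(S)$ on the full four-element set yields $\mbox{trdeg}_{\Q}\Q(w,\log z,\log w)\ge 4$, contradicting the obvious upper bound $3$. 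That transcendence-degree clash is the actual contradiction; your phrase ``contradicts the Hermite--Lindemann transcendence of $\log z$'' misidentifies it---Hermite--Lindemann only gives that a single number is transcendental, which cannot conflict with a \emph{lower} bound on transcendence degree.
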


Let us describe the difficulty in proving Conjecture~\ref{conj}. To study the arithmetic nature of the power of two complex numbers, we can use the Gelfond-Schneider Theorem. However, it only applies in the case of algebraic numbers. The nature of $\alpha^{\beta}$, when one or both of the numbers $\alpha$~and~$\beta$ is transcendental, is in general unknown. The sole result in this direction is due to Caveny~\cite{caveny}. He proved that if $\alpha$ is a $T$- or $U$-number and $\beta$ is a $U$-number (as defined also in~\cite[Chapter 10, Section 7H]{paulo}), then $\alpha^{\beta}$ is transcendental. For our problem, Caveny's theorem is not useful, because it seems harder to prove that a complex number is a $T$-number than to prove it is transcendental.

In Section~\ref{proof}, we give a conditional proof of Conjecture~\ref{conj}. In fact, our main result is the following more general one, in which $z^{\,w}$ is not necessarily equal to $w^{\,z}$.

\begin{theorem}\label{main}
Assume Schanuel's conjecture $(S)$ and let $z$~and~$w$ be complex numbers, not $0$ or~$1$. If $z^{\,w}$~and~$w^{\,z}$ are algebraic, then $z$~and~$w$ are either both rational or both transcendental. In particular, if $(S)$ is true, then Conjecture~\ref{conj} is also true.
\end{theorem}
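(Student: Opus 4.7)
The plan is a case analysis on the arithmetic type of $z$ and $w$, where the only nontrivial case (the one that really needs Schanuel) is when one of them is algebraic irrational and the other transcendental. Throughout, set $a = z^{\,w}$ and $b = w^{\,z}$, both algebraic by hypothesis, and fix branches of logarithm so that $a = e^{w \log z}$ and $b = e^{z \log w}$. Since ``both transcendental'' already satisfies the conclusion, I only need to exclude every configuration in which exactly one of $z, w$ is algebraic, and show that if both are algebraic then both are rational.

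If both $z, w \in \QQ \setminus \{0,1\}$, then $w$ irrational would force $z^{\,w}$ transcendental by Theorem~\ref{GelSchn}; so $w \in \Q$, and symmetrically $z \in \Q$. If $z \in \Q \setminus \{0,1\}$ is rational and $w$ transcendental, write $z = p/q$ with $p \in \Z \setminus \{0\}$; then $b^q = w^p$ is algebraic, forcing $w$ algebraic, a contradiction. The mirror case ($w \in \Q$, $z$ transcendental) follows by the symmetry of the hypothesis. Only the case $z \in \QQ \setminus \Q$ and $w$ transcendental (up to that same symmetry) remains, and this is where I invoke $(S)$.

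I first apply $(S)$ to $\alpha_1 = \log z$, $\alpha_2 = \log w$, $\alpha_3 = w \log z$. A nontrivial $\Q$-linear relation $c_1 \log z + c_2 \log w + c_3 w \log z = 0$ (scaled so that $c_i \in \Z$) exponentiates to $z^{c_1} w^{c_2} a^{c_3} = 1$; since $z, a$ are algebraic and $w$ transcendental, $c_2 = 0$, and then $(c_1 + c_3 w)\log z = 0$ forces $c_1 = c_3 = 0$. Thus the $\alpha_i$ are $\Q$-linearly independent, so $(S)$ gives $\mathrm{trdeg}_{\Q}\,\Q(\log z, \log w, w\log z, z, w, a) \geq 3$; but $z$ and $a$ are algebraic, so this equals $\mathrm{trdeg}_{\Q}\,\Q(\log z, \log w, w) \leq 3$, hence exactly $3$. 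So $\log z, \log w, w$ are algebraically independent over $\Q$, and therefore also over $\QQ$.

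I then enlarge the package with $\alpha_4 = z \log w$. Since $z \log w \in \QQ(\log w)$ and $b = e^{z \log w}$ is algebraic, adjoining $\alpha_4$ and $e^{\alpha_4}$ does not raise the transcendence degree, so $(S)$ applied to $\alpha_1, \dots, \alpha_4$ forces a nontrivial $\Q$-linear dependence $c_1 \log z + c_2 \log w + c_3 w \log z + c_4 z \log w = 0$. Exponentiating gives $z^{c_1} w^{c_2} a^{c_3} b^{c_4} = 1$, whence $c_2 = 0$ (else $w$ is algebraic), and $c_4 \neq 0$ by the independence established in the previous paragraph. Setting $A = -c_1/c_4$ and $C = -c_3/c_4$ in $\Q$, the relation becomes $z \log w = (A + Cw)\log z$, i.e.\ the polynomial $P(X,Y,W) = zY - AX - CXW \in \QQ[X,Y,W]$ vanishes at $(\log z, \log w, w)$; since the coefficient of $Y$ in $P$ is $z \neq 0$, $P$ is nonzero, contradicting algebraic independence over $\QQ$. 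The main obstacle lies in designing this four-element Schanuel package so that the forced linear dependence both eliminates $w$ upon exponentiation (through the transcendence of $w$) \emph{and} produces a genuinely nonzero $\QQ$-polynomial relation among three numbers that the earlier three-element application has just proved algebraically independent.
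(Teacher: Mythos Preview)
Your proof is correct, and it takes a genuinely different route from the paper's.

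The paper works with the multiplicative-independence reformulation of $(S)$ (its Remark~\ref{log}) and bootstraps through \emph{four} applications of Schanuel: first to $\{\alpha,z\}$, then to $\{\alpha,z,\log\alpha,\log z\}$ (producing algebraic independence of $\log\alpha,\log z,\log\log\alpha,\log\log z$), then to $\{\alpha,\beta,z\}$, and finally to $\{\alpha,\beta,z,\log\alpha,\log\beta,\log z\}$, reaching six algebraically independent numbers that include the iterated logarithms $\log\log\alpha,\log\log\beta,\log\log z$; the contradiction comes from the identity $z^{-1}\log\beta=\log\log\alpha-\log\log z$. Your argument instead uses the ordinary linear-independence form of $(S)$ just \emph{twice}, on the packages $\{\log z,\log w,w\log z\}$ and $\{\log z,\log w,w\log z,z\log w\}$, and never needs $\log\log$ at all: the first application yields algebraic independence of $\log z,\log w,w$ over $\QQ$, and the second forces a $\Q$-linear relation that immediately rewrites as a nonzero $\QQ$-polynomial vanishing on that very triple. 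This is noticeably more economical --- fewer invocations of $(S)$, lower transcendence degrees throughout, and no double logarithms --- while the paper's longer chain is perhaps more mechanical in how each step feeds the next. Your explicit handling of the subcase $z\in\Q$, $w$ transcendental (via $b^{q}=w^{p}$) is a nice touch, though in fact your Schanuel argument already covers it, since it only uses $z\in\QQ\setminus\{0,1\}$.
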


Here is an application.

\begin{corollary} \label{COR: z=/=w}
Assuming $(S)$, we can find transcendental positive real numbers $x,\, y$, and $s\neq t$ such that the three numbers $x^{\,y}\neq y^{\,x}$ and $s^{\,t}=t^{\,s}$ are integers.
\end{corollary}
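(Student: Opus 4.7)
The plan is to construct the pair $(s,t)$ and the pair $(x,y)$ separately, in each case invoking Theorem~\ref{main} to promote the algebraicity of the powers to the transcendence of the bases.

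For $(s,t)$, I would take the integer $r = 17$. Since $17 > e^{\,e}$, Lemma~\ref{xy=yx=z} provides unique positive reals $1 < s < e < t$ with $s^{\,t} = t^{\,s} = 17$; these are nonzero and distinct from $1$, and their common power is the algebraic integer $17$. Theorem~\ref{main} then forces $s$ and $t$ to be either both rational or both transcendental. The rational case can be excluded by a lowest-terms argument: if $s,t > 1$ are rational with $s^{\,t} \mem{Z}$, then $s$ (and likewise $t$) must be a positive integer, and the only positive-integer pair with $s<t$ and $s^{\,t} = t^{\,s}$ is $(2,4)$, giving $s^{\,t} = 16 \neq 17$. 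Hence $s$ and $t$ are both transcendental.

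For $(x,y)$, I would solve the real system $x^{\,y} = 2$, $y^{\,x} = 3$. Eliminating $y = \log 2 / \log x$ reduces this to the single transcendental equation $f(x) = \log 2$, where $f(x) := \log x \cdot 3^{\,1/x}$. Since $f$ is continuous on $(1,\infty)$ with $f(1^+) = 0$ and $f(2) = \sqrt{3}\log 2 > \log 2$, the intermediate value theorem yields a solution $x_0 \in (1,2)$, and then $y_0 := \log 2 / \log x_0 > 1$. Now $x_0, y_0 \notin \{0,1\}$, and $x_0^{\,y_0} = 2$, $y_0^{\,x_0} = 3$ are algebraic, so Theorem~\ref{main} forces $x_0$ and $y_0$ to be either both rational or both transcendental. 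The same lowest-terms argument rules out rationality: it would require $x_0$ to be a positive integer, contradicting $x_0 \in (1,2)$. Hence $x_0$ and $y_0$ are transcendental, and $x_0^{\,y_0} = 2 \neq 3 = y_0^{\,x_0}$ supplies the desired integers.

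The main obstacle is essentially nominal: the real solvability comes from IVT and the rational exclusion is an elementary lowest-terms argument. The substantive content is entirely packaged into Theorem~\ref{main}, where Schanuel's conjecture actually enters.
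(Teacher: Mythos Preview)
Your proof is correct and follows essentially the same strategy as the paper: construct real solutions via the intermediate value theorem (for $(x,y)$) and Lemma~\ref{xy=yx=z} (for $(s,t)$), apply Theorem~\ref{main} to reduce to the both-rational-or-both-transcendental dichotomy, then exclude the rational case. The only differences are cosmetic: the paper builds $(x,y)$ with $x^{\,y}=3$, $y^{\,x}=2$ and $x\in(2,3)$ via the function $X\mapsto X^{\,2^{1/X}}$, whereas you take $x^{\,y}=2$, $y^{\,x}=3$ with $x\in(1,2)$; and for $(s,t)$ the paper cites Proposition~\ref{(i)RT=TR=A} (which rests on Gelfond--Schneider) to rule out both being rational, while your direct lowest-terms argument is more elementary and self-contained.
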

\begin{proof}
Define $f(X)=X^{\,2^{\,1/X}}$. Then $f(2)<3<f(3)$, and so by continuity we can choose $x \in (2,3)$ with $f(x)=3$. Set $y =2^{\,1/{\,x}}$. From the equalities $x^{\,y}=3$ and $y^{\,x}=2$, we deduce that $x$~and~$y$ cannot both be rational. If $(S)$ is true, then by Theorem~\ref{main} both $x$~and~$y$ are transcendental.

By Lemma~\ref{xy=yx=z}, we can find $s\neq t$ in $\mathbb{R}^+$ such that $s^t=t^s=17$. If $(S)$ is true, then Proposition~\ref{(i)RT=TR=A} and Theorem~\ref{main} imply that $s$~and~$t$ are both transcendental.
\end{proof}

Summarizing, \emph{the answer to Question~\ref{T_1^{T_2}} is unconditionally no in the case $z = w$, and is conditionally no in the case $z \neq w$ both when $z^w \neq w^z$ and when $z^w = w^z$.}

Here is another consequence of Theorem~\ref{main}.

\begin{corollary} \label{COR: sqrt(2)}
Assume $(S)$ and let $\alpha,\beta,\gamma$ be nonzero complex numbers, with $\alpha\neq1$ and $\beta^{\,\gamma}\neq1$. Suppose that at least one of $\alpha$ and $\beta^{\,\gamma}$ is irrational, and at least one is algebraic. If $\beta^{\,\gamma\,\alpha}$ is also algebraic, then $\alpha^{\,\beta^{\,\gamma}}$ is transcendental.
\end{corollary}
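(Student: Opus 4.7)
The plan is to reduce Corollary 2 to Theorem~\ref{main} by a direct contrapositive argument, taking $z=\alpha$ and $w=\beta^{\gamma}$. Under this identification the three powers appearing in the corollary become
\[
z^{\,w}=\alpha^{\,\beta^{\,\gamma}},\qquad w^{\,z}=(\beta^{\,\gamma})^{\,\alpha}=\beta^{\,\gamma\alpha},\qquad
\]
so the hypothesis that $\beta^{\,\gamma\alpha}$ is algebraic is exactly the statement that $w^{\,z}$ is algebraic. The hypotheses that $\alpha\neq 0,1$ and $\beta^{\,\gamma}\neq 0,1$ (the former by assumption, the latter because $\beta\neq 0$ gives $\beta^{\,\gamma}\neq 0$ while $\beta^{\,\gamma}\neq 1$ is given) place $z$ and $w$ exactly in the regime where Theorem~\ref{main} applies.

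Next I would argue by contradiction: suppose $\alpha^{\,\beta^{\,\gamma}}=z^{\,w}$ is also algebraic. Then both of the powers $z^{\,w}$ and $w^{\,z}$ are algebraic, so under $(S)$ Theorem~\ref{main} forces the dichotomy that $z$ and $w$ are \emph{either both rational or both transcendental}. The two side hypotheses of the corollary are tailored to rule out each half of this dichotomy: the assumption that at least one of $\alpha,\beta^{\,\gamma}$ is irrational excludes the ``both rational'' case, while the assumption that at least one of them is algebraic excludes the ``both transcendental'' case. This contradiction yields the transcendence of $\alpha^{\,\beta^{\,\gamma}}$.

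The only subtlety, and the one place where one has to be careful rather than formal, is the identification $(\beta^{\,\gamma})^{\,\alpha}=\beta^{\,\gamma\alpha}$. This requires choosing branches of the complex logarithm consistently for the three exponentials in the statement; since the paper is already working throughout with principal values and Theorem~\ref{main} is stated for arbitrary fixed values of $z^{\,w}$ and $w^{\,z}$, I would simply fix a branch of $\log\beta$ at the outset and interpret every power $\beta^{\,\xi}$ as $\exp(\xi\log\beta)$, after which the identity is tautological. No further delicate estimate or independent application of Schanuel's conjecture is needed—the entire content of the corollary is a bookkeeping reduction to Theorem~\ref{main}.
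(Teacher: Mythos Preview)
Your proposal is correct and follows exactly the paper's own argument: set $z=\alpha$, $w=\beta^{\gamma}$, and apply Theorem~\ref{main} by contraposition, using the two side hypotheses to eliminate the ``both rational'' and ``both transcendental'' branches of the dichotomy. The paper's proof is a terse two-line version of precisely this; your extra remarks on branch choices are reasonable but not needed for the level of detail the paper adopts.
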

\begin{proof}
Set $z=\alpha$ and $w=\beta^{\,\gamma}$. Since $w^{\,z}=\beta^{\,\gamma\,\alpha}$ is algebraic, Theorem~\ref{main} implies that $z^{\,w}=\alpha^{\,\beta^{\,\gamma}}$ must be transcendental.
\end{proof}

The following examples of Corollary \ref{COR: sqrt(2)} may be known, but we have not found them in the literature.

\begin{exe} \label{EX: sqrt(2)}
\emph{Conjecture $(S)$~implies the transcendence of the numbers
\begin{equation}
    \sqrt{2}^{\,\sqrt{2}^{\,\sqrt{2}}}, \quad i^{\,i^{\,i}},  \quad i^{\,e^{\,\pi}}, \label{EQ: a^b^c}
\end{equation}
because each of them is of the form $\alpha^{\,\beta^{\,\gamma}}$, where $\alpha\in\QQ\setminus\Q$ and $\beta^{\,\gamma}\neq1$ and $\beta^{\,\gamma\,\alpha}\mem{\QQ}$.}
\end{exe}

We now give another application of Schanuel's Conjecture. Notice first that, by the Gelfand-Schneider Theorem, if $\alpha^{\,z}=z$, where $\alpha\neq0$ is algebraic and $z$~is irrational, then $z$ is transcendental. (For instance, from the example $t^{\,t} =2$ for Proposition~\ref{pp}, we get $(1/2)^{\,z}=z$, where $z=1/t\not\mem{\QQ}$.) The following statement is stronger, since $\alpha^{\,z}=z$ implies $\alpha^{\,\alpha^{\,z}}=z$, but not conversely (see~\cite[Section~4]{SM}).

\begin{conjecture} \label{conj2}
Let $\alpha\neq0$~and~$z$ be complex numbers, with $\alpha$ algebraic and $z$~irrational. If $\alpha^{\,\alpha^{\,z}}=z,$ then $z$ is transcendental.
\end{conjecture}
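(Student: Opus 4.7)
The plan is to argue by contradiction: assume $z$ is algebraic and irrational, and derive a violation of Schanuel's conjecture. First, one disposes of $\alpha=1$ (which forces $z=1$, contradicting irrationality), so $\alpha\ne 0,1$. Fix a branch so that $\log\alpha\ne 0$ and set $w=\alpha^z=e^{z\log\alpha}$. By Hermite--Lindemann (Theorem~\ref{Lindemann}), $\log\alpha$ is transcendental, and by Gelfond--Schneider (Theorem~\ref{GelSchn}), $w$ is transcendental. The hypothesis $\alpha^{\alpha^z}=z$ then reads $e^{w\log\alpha}=z$.

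The key move is to apply $(S)$ to the three numbers
\[
\lambda_1=\log\alpha,\qquad \lambda_2=z\log\alpha,\qquad \lambda_3=w\log\alpha,
\]
whose exponentials are $\alpha$, $w$, and $z$ respectively. First I would verify linear independence over $\Q$: a nontrivial relation collapses, after dividing by $\log\alpha$, to $a_1+a_2z+a_3w=0$ with $a_j\in\Q$. If $a_3=0$, this forces $z\in\Q$ (otherwise $a_2=a_1=0$ and the relation is trivial), contradicting irrationality; if $a_3\ne 0$, it expresses $w$ as an algebraic number (since $z$ is assumed algebraic), contradicting the transcendence of $w$. With linear independence in hand, Schanuel with $n=3$ yields
\[
\operatorname{trdeg}_{\Q}\Q\bigl(\log\alpha,\,z\log\alpha,\,w\log\alpha,\,\alpha,\,w,\,z\bigr)\ge 3.
\]

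On the other hand, $\alpha$ and $z$ are algebraic, and the two products $z\log\alpha$, $w\log\alpha$ plainly lie in $\Q(z,\log\alpha,w)$, so the field on the left simplifies to $\Q(\alpha,z,\log\alpha,w)$, whose transcendence degree over $\Q$ is at most $2$. The resulting inequality $3\le 2$ is the desired contradiction, and hence $z$ must be transcendental.

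The whole argument hinges on the choice of $\lambda_3=w\log\alpha$: only there does the functional equation $\alpha^{\alpha^z}=z$ enter, through the identity $e^{\lambda_3}=\alpha^w=z$, and this is exactly what pins $e^{\lambda_3}$ to the algebraic locus and drops the right-hand transcendence degree to $2$. The main technical obstacle is the linear-independence step, where Gelfond--Schneider is essential to rule out $w$ being algebraic; without that input the triple could degenerate and Schanuel would supply no useful bound. Everything else reduces to routine bookkeeping of transcendence degrees.
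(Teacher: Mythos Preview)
Your argument is correct and essentially identical to the paper's own proof: both apply $(S)$ to the triple $\log\alpha,\ z\log\alpha,\ \alpha^{z}\log\alpha$, verify $\Q$-linear independence via Gelfond--Schneider (transcendence of $\alpha^{z}$) and the irrationality of $z$, and then contrast the Schanuel lower bound $\ge 3$ with the evident bound $\mathrm{trdeg}_{\Q}\Q(\log\alpha,\alpha^{z})\le 2$ coming from $\alpha,\,z=\alpha^{\alpha^{z}}\in\QQ$. The only cosmetic differences are your introduction of the name $w=\alpha^{z}$ and the extra (unneeded) remark that $\log\alpha$ is transcendental.
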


\begin{theorem}\label{main2}
If $(S)$ is true, then Conjecture~\ref{conj2} is also true.
\end{theorem}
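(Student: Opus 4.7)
The plan is to argue by contradiction. Assume that $z$ is algebraic; since $z$ is irrational, $z\in\QQ\setminus\Q$. The first step is to dispose of the degenerate case $\alpha=1$, which would force $z=\alpha^{\alpha^z}=1\in\Q$, so we may take $\alpha$ algebraic with $\alpha\neq0,1$. Applying the Gelfond-Schneider theorem (Theorem~\ref{GelSchn}) to the base $\alpha$ and exponent $z$, the number $w:=\alpha^z$ is then transcendental.

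Next I would fix a logarithm $\log\alpha\neq0$ of $\alpha$ compatible with the definition of $\alpha^z$, so that $w=\exp(z\log\alpha)$ and, by the hypothesis $\alpha^{\alpha^z}=z$, also $z=\exp(w\log\alpha)$. The crucial choice is to feed Schanuel's conjecture the \emph{three} exponents
\[
\alpha_1 = \log\alpha,\qquad \alpha_2 = z\log\alpha,\qquad \alpha_3 = w\log\alpha.
\]
Checking that $\alpha_1,\alpha_2,\alpha_3$ are linearly independent over $\Q$ is the only step that takes a little care: after dividing any $\Q$-linear relation by $\log\alpha$, it reduces to $a+bz+cw=0$ with $a,b,c\in\Q$ not all zero; the case $c\neq0$ would force $w=-(a+bz)/c\in\QQ$, contradicting transcendence of $w$, while $c=0$ would force $z=-a/b\in\Q$, contradicting irrationality of $z$.

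Conjecture~\ref{schanuel} with $n=3$ then asserts that among $\alpha_1,\alpha_2,\alpha_3$ and $e^{\alpha_1}=\alpha$, $e^{\alpha_2}=w$, $e^{\alpha_3}=z$ at least three are algebraically independent, so the field they generate has transcendence degree at least $3$ over $\Q$. However, this field sits inside $\QQ(\log\alpha,\,w)$, because $\alpha,z\in\QQ$ and $z\log\alpha=z\cdot\log\alpha$, $w\log\alpha=w\cdot\log\alpha$; and $\QQ(\log\alpha,w)$ visibly has transcendence degree at most $2$ over $\Q$. This contradiction forces $z$ to be transcendental.

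I do not anticipate a genuine obstacle beyond picking the \emph{right} Schanuel input: if one supplies only two of the $\alpha_i$'s, the conjecture merely yields transcendence degree $\geq2$, which matches the obvious upper bound on the transcendence degree of $\QQ(\log\alpha,w)$ and delivers nothing. The whole point is to use all three of $\log\alpha$, $z\log\alpha$, $w\log\alpha$ simultaneously, so that both relations $\alpha^z=w$ and $\alpha^w=z$ are captured by a single application of~$(S)$.
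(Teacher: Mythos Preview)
Your argument is correct and matches the paper's proof essentially step for step: the paper also rules out $\alpha=1$, invokes Gelfond--Schneider to get $\alpha^{z}$ transcendental, checks that $1,z,\alpha^{z}$ (hence $\log\alpha,\,z\log\alpha,\,\alpha^{z}\log\alpha$) are $\Q$-linearly independent, applies $(S)$ once with $n=3$, and obtains the contradiction $3\le\mathrm{trdeg}_{\Q}\Q(\log\alpha,\alpha^{z})\le 2$. The only cosmetic gap is in your linear-independence check: when $c=0$ you should also handle $b=0$ (which forces $a=0$), but this is trivial.
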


Theorem \ref{main2} yields a conditional proof of~\cite[Conjecture 4.6]{SM}, because the latter is a consequence of Conjecture~\ref{conj2}.

The proofs of Theorems~\ref{main} and~\ref{main2} are given in Section~\ref{proof}.


\section{Preliminaries on Schanuel's conjecture $(S)$} \label{SEC: schanuel}

Here are two more consequences of $(S)$ if it is true. (They, too, are open problems if $(S)$ is not assumed.)

\begin{itemize}
\item \emph{The numbers $e,e^{\pi},e^e,e^i,\pi,\pi^e,\pi^{\pi},\pi^i,2^{\pi},2^e,2^i,\log \pi,\log 2, \log 3, \log \log 2,$\\ $(\log 2)^{\log 3}$, and $2^{\sqrt{2}}$ are algebraically independent. In particular, they are all transcendental.} (The proof in~\cite[Conjecture $(S_7)$, p. 326]{paulo}, like our proof of Theorem~\ref{main}, invokes $(S)$ four times.)
\item \emph{The numbers $e,e^e, e^{e^e},\dots$ are algebraically independent.} (Take $\alpha_1=1$ and $\alpha_2=e$ and proceed by induction.)
\end{itemize}

Also, $(S)$ implies generalizations of many important theorems in transcendental number theory. We mention two. Proofs of them and of several other classical consequences of $(S)$, together with an elegant exposition of it, can be found in \cite[Chapter 10, Section 7G]{paulo}. See also~\cite{chow}.

\begin{itemize}
\item (Generalization of the Hermite-Lindemann Theorem) \emph{If $\alpha$ is a nonzero algebraic number and $(S)$ is true, then
$$
e^{e^{\,\cdot^{\cdot^{\cdot^{e^{\alpha}}}}}}
$$
is transcendental.}
\item (Generalization of the Gelfond-Schneider Theorem) \emph{If $(S)$ is true and $\alpha$~and~$\beta$ are algebraic numbers, with $\alpha \neq 0$ or $1$, and $\beta$ irrational, then $\alpha^{\,\beta}$ and $\log \alpha$ are algebraically independent.}

\end{itemize}

In addition, $(S)$ implies the algebraic independence of certain numbers over fields different from~$\Q$:
\begin{itemize}
\item \emph{Set $E=\cup_{n=0}^{\infty} E_n$, where $E_0=\QQ$ and $E_n=\overline{E_{n-1}(\{e^{\alpha}:\alpha \in E_{n-1}\})}$, for $n\geq 1$. If $(S)$ is true, then the numbers $$
\pi, \log \pi, \log \log \pi,\log \log \log \pi, \dots
$$
are algebraically independent over $E$}. (For the proof, see \cite{die2}.)
\end{itemize}

We now recall some definitions.

Given field extensions $\C\supset L\supset K$, a subset $B$ of $L$ is a \textit{transcendence basis} of $L$ over $K$ if the elements of $B$ are algebraically independent over $K$ and if furthermore $L$ is an algebraic extension of the field $K(B)$ (the field obtained from $K$ by adjoining the elements of $B$). One can show that every field extension $L/K$ has a transcendence basis $B\subset L$, and that all transcendence bases have the same cardinality $|B|$. This cardinality is the \textit{transcendence degree} of the extension, and is denoted trdeg$_K L$ or trdeg$(L/K)$.

With this definition, $(S)$ can be restated: \textit{If $\alpha_1,\ldots,\alpha_n$ are linearly independent over $\Q$, then}
\begin{center}
    trdeg$_{\Q}\Q(\alpha_1,\dots,\alpha_n,e^{\alpha_1},\dots,e^{\alpha_n})\geq n.$
\end{center}

Here are some facts about transcendence degree that we shall use in the next section. (For proofs, see~\cite[Chapter VIII]{La2}.) Let $X$~and~$Y$ be finite subsets of $\C$.
\begin{itemize}
\item[(i).] If $X\subset \QQQ$, then trdeg$_\Q\Q(X\cup Y)=$ trdeg$_\Q\Q(Y)$. (Algebraic numbers do not contribute to the transcendence degree.)
\item[(ii).] If $X\subset Y$, then trdeg$_\Q\Q(X \cup Y)=$ trdeg$_\Q\Q(Y)$. (Only distinct numbers can contribute to the transcendence degree.)
\item[(iii).] If trdeg$_\Q\Q(Y)=|Y|$, then $Y$ is an algebraically independent set.
\item[(iv).] We have trdeg$_\Q\Q(X)=$ trdeg$_{\QQ}\QQ(X)$. (It makes no difference to say that a set is algebraically independent over $\Q$ or over $\QQ$.)
\end{itemize}

\indent
Recall also that a set of nonzero complex numbers is \textit{multiplicatively independent} if for any finite subset $\{x_1,\dots,x_m\}$ the relation $x_1^{a_1}\cdots x_m^{a_m}=1$, with integer exponents $a_1,\dots,a_m$, implies $a_1=\cdots=a_m=0$. Otherwise, the set is {\it multiplicatively dependent}. For example, the set of prime numbers is multiplicatively independent (by the Fundamental Theorem of Arithmetic). Any algebraically independent set is also multiplicatively independent. 

\begin{remark}\label{log}
\emph{It is a simple matter to show that $x_1,\dots,x_m$ are multiplicatively independent if and only if $\log x_1,\dots,\log x_m$ are linearly independent over $\Q$. It follows easily that $(S)$ can be restated:} If $\alpha_1,\ldots,\alpha_n$ are multiplicatively independent, then
\begin{center}
    \emph{trdeg}$_{\Q}\Q(\alpha_1,\dots,\alpha_n,\log \alpha_1,\dots,\log \alpha_n)\geq n.$
\end{center}
\end{remark}

\noindent This is the form of $(S)$ we shall use in the proof of Theorem~\ref{main}.


\section{Proofs of Theorems~\ref{main} and~\ref{main2}} \label{proof}

\noindent\emph{Proof of Theorem~\ref{main}}. Assume that $z$~and~$w$ are not equal to $0$ or $1$, and that$$\alpha:=z^w, \qquad \beta:=w^z$$are algebraic. By the Gelfond-Schneider Theorem, to prove Theorem~\ref{main} it suffices to show that if $w$ is transcendental, then so is $z$. Suppose on the contrary that $z$ is algebraic. Since $z\neq0$ or $1$, and $w\neq0$, taking logarithms gives
\begin{equation} \label{eq1}
    w = \dfrac{\log \alpha}{\log z}, \qquad \log w = \dfrac{\log \beta}{z}.
\end{equation}
We first show that $\alpha,z,\log \alpha, \log z$ are multiplicatively independent.

Since $\log \alpha/ \log z=w$ is transcendental, $\alpha$~and~$z$ are multiplicatively independent. Then $(S)$ implies trdeg$_\Q\Q(\alpha, z,\log \alpha,\log z)\geq 2$. Hence, as $\alpha$~and~$z$ are algebraic, $\log \alpha$ and $\log z$ are algebraically independent (see facts~(i) and (iii)). It follows that any solution $(a,b,c,d)\in \Z^4$ to the equation
$$
\alpha^az^b(\log \alpha)^c(\log z)^d=1
$$
has $c=d=0$ (see~(iv)). Thus $\alpha^az^b=1$, and now the multiplicative independence of $\alpha$~and~$z$ implies $a=b=0$. Thus $a=b=c=d=0$, and so the numbers $\alpha,z,\log \alpha, \log z$ are multiplicatively independent.

Again by $(S)$, 
\begin{center}
trdeg$_{\Q}\Q(\alpha,z,\log \alpha,\log z,\log \alpha,\log z,\log \log \alpha, \log \log z)\geq 4,$
\end{center}
and as $\alpha$~and~$z$ are algebraic, $\log \alpha,\log z,\log \log \alpha, \log \log z$ are algebraically independent (see (i), (ii), and (iii)). 

We now prove that $\alpha,\beta,z$ are multiplicatively independent. Say \mbox{$\alpha^a\beta^bz^c=1$}, where \mbox{$(a,b,c)\in \Z^3$.} If $b\neq 0$, then taking logarithms gives $\log \beta\in \Q(\log \alpha,\log z)$. But that implies, since taking logarithms in the first equality in \eqref{eq1} and substituting the second yields the relation
\begin{equation}\label{key}
    z^{-1}\log \beta=\log \log \alpha-\log \log z,
\end{equation}
that $\log \log \alpha- \log \log z \in \Q(z,\log \alpha,\log z)$, contradicting the algebraic independence of $\log \alpha,\log z,$ $\log \log \alpha, \log \log z$. Thus $b=0$. Now $a=c=0$, because $\alpha$~and~$z$ are multiplicatively independent. Hence $\alpha,\beta,z$ are multiplicatively independent.

Then by $(S)$
\begin{center}
    trdeg$_{\Q}\Q(\alpha,\beta,z,\log \alpha,\log \beta,\log z)\geq 3,$
\end{center}
and so, as $\alpha,\beta,z$ are algebraic, $\log \alpha,\log \beta,\log z$ are algebraically independent. It follows that any solution $(a,b,c,d,f,g)\in \Z^6$ to the equation
$$
\alpha^a\beta^bz^c(\log \alpha)^d(\log \beta)^f(\log z)^g=1
$$
has $d=f=g=0$. Then $a=b=c=0$, because $\alpha,\beta,z$ are multiplicatively independent. We conclude that $\alpha,\beta,z,\log \alpha,\log \beta,\log z$ are multiplicatively independent. Now $(S)$ implies
$$
\mbox{trdeg}_{\Q}\Q(\alpha,\beta,z,\log \alpha,\log \beta,\log z,\log \alpha,\log \beta, \log z,\log \log \alpha,\log \log \beta, \log \log z) \ge 6.
$$
Hence, as $\alpha,\beta,z$ are algebraic, $\log \alpha, \log \beta, \log z,\log \log \alpha,\log \log \beta,\log \log z$ are algebraically independent. But that contradicts the relation \eqref{key}. Therefore, $z$ is transcendental. This completes the proof of Theorem~\ref{main}.
\qed\\

\noindent\emph{Proof of Theorem~\ref{main2}}. Suppose on the contrary that $(S)$ holds, and that $\alpha\neq0$ and $z\not\mem{Q}$ are both algebraic, and $\alpha^{\,\alpha^{\,z}}=z$. Then $\alpha\neq1$, and so, by the Gelfond-Schneider Theorem, $\alpha^{\,z}\not\mem{\QQ}$. If $a+bz+c\,\alpha^{\,z}=0$, where $(a,b,c)\mem{Z}^3$, then $a+bz\mem{\QQ}$ and $\alpha^{\,z}\not\mem{\QQ}$ imply $c=0$. Now $z\not\mem{Q}$ gives $a=b=0$. Thus $1,z,\alpha^{\,z}$ are linearly independent over~$\Q$. Multiplying them by $\log\alpha\neq0$, we get that $\log\alpha,z\log\alpha,\alpha^{\,z}\log\alpha$ are also linearly independent over~$\Q$. Hence by $(S)$
\begin{equation*}
    \tau := \mbox{trdeg}_{\Q}\Q\left(\log\alpha,z\log\alpha,\alpha^{\,z}\log\alpha,\alpha,\alpha^{\,z},\alpha^{\,\alpha^{\,z}}\right)\ge3.
\end{equation*}
But since $\alpha\mem{\QQ}$ and $\alpha^{\,\alpha^{\,z}}=z\mem{\QQ}$, it follows that $\tau = \mbox{trdeg}_{\Q}\Q(\log\alpha,\alpha^{\,z})\le2$ (see facts (i), (ii), and (iv)), a~contradiction. This proves the theorem.
\qed

\section*{Acknowledgement}

The first author is grateful to FEMAT for financial support.

\end{document}